\DeclareMathOperator{\outhom}{OutHom}
\DeclareMathOperator{\stab}{Stab}
\DeclareMathOperator{\Aut}{Aut}
\DeclareMathOperator{\Hom}{Hom}
\DeclareMathOperator{\Ker}{Ker} 
\DeclareMathOperator{\Mod}{Mod}
\DeclareMathOperator{\Out}{Out}
\DeclareMathOperator{\stabker}{\underrightarrow{\Ker}}
\DeclareMathOperator{\trace}{tr}
\DeclareMathOperator{\tr}{tr}
\def\sl2c{\ensuremath{{SL}(2,\mathbb{C})}}
\def\psl2c{\ensuremath{{PSL}(2,\mathbb{C})}}
\def\t1sl2c{{\mathfrak sl}_2\mathbb{C}}
\def\free{\ensuremath{\mathbb{F}}}
\def\zee{\mathbb{Z}}
\def\define{\raisebox{0.3pt}{\ensuremath{:}}\negthinspace\negthinspace=}
\def\fontop#1{\stackrel{#1}{\longrightarrow}}
\def\immerses{\looparrowright}
\def\onto{\twoheadrightarrow}
\def\into{\hookrightarrow}
\def\ncl#1{\mathord{\langle}\mskip -4mu plus 0mu minus 0mu \mathord{\langle}#1\mathord{\rangle}\mskip -4mu plus 0mu minus 0mu \mathord{\rangle}}
\newcommand{\Doubletwo}[3]{ \left\{ #1 \mid #3\right\} }
\newcommand{\Doubleone}[1]{ \left\{ #1 \right\} }
\newcommand{\Doublethr}[3]{ \left\{ #1 \right\}_{#3} }
\newcommand{\set}[1]{%
\@ifnextchar:{\Doubletwo{#1}}{\@ifnextchar_{\Doublethr{#1}}{\Doubleone{#1}}}%
}
\newcommand{\grouptwo}[3]{ \langle #1 \mid #3\rangle }
\newcommand{\groupone}[1]{ \langle #1 \rangle }
\newcommand{\group}[1]{%
\@ifnextchar:{\grouptwo{#1}}{\groupone{#1}}%
}
\newtheorem*{theorem*}{Theorem}
\newtheorem{theorem}{Theorem}
\newtheorem{lemma}[theorem]{Lemma}
\newtheorem{example}[theorem]{Example}
\newtheorem{corollary}[theorem]{Corollary}
\newtheorem{question}{Question}
\theoremstyle{definition}
\newtheorem{definition}[theorem]{Definition}
\theoremstyle{remark}
\newtheorem{remark}[theorem]{Remark}
\newcommand{\mnote}[1]{}
\title{Simple loop conjecture for limit groups}
\author{Larsen Louder}
\begin{document}
\maketitle

\begin{abstract}
  There are noninjective maps from surface groups to limit groups that
  don't kill any simple closed curves. As a corollary, there are
  noninjective all-loxodromic representations of surface groups to
  $\sl2c$ that don't kill any simple closed curves, answering a
  question of Minsky. There are also examples, for any $k$, of
  noninjective all-loxodromic representations of surface groups
  killing no curves with self intersection number at most $k$.
\end{abstract}

\section{Introduction}

\fancypagestyle{firststyle}
{
   \fancyhf{}
   \fancyhead[L]{Accepted for publication in Israel Journal of Mathematics}
}

\thispagestyle{firststyle}

\par The simple loop conjecture says that non-injective maps from
closed orientable surface groups to three-manifold groups kill simple
closed curves. A related question, due to Minsky and motivated by the
case of a hyperbolic three-manifold, is whether or not the same holds
for maps to $\sl2c$. We give examples in all genera showing that this
is not the case.

\begin{question}[Delzant]
  Does the simple loop conjecture hold for limit groups? 
\end{question}

\par The negative answer implies a negative answer to Minsky's question
since limit groups embed in $\sl2c$. Jason
Manning and Daryl Cooper have constructed, using different methods,
examples of nonfaithful representations of surface groups which kill
no simple closed curves, but whose images contain parabolic
elements~\cite{manningnonfaithful}.

\begin{theorem}
  \label{maintheorem}
  The following hold:
  \begin{itemize}
  \item Let $S$ be a compact orientable surface of genus at least
    three.  Given $k$ there is a limit group $L$ and a
    non-$k$-pinching noninjective map $\pi_1(S)\to L$.
  \item Let $M$ be the figure-eight knot complement. Let $S$ be the
    compact orientable surface of genus two. Given $k$ there is a
    $\pi_1(M)$-limit group $L$ and a noninjective non-$k$-pinching map
    $\pi_1(S)\to L$.
\end{itemize}
\end{theorem}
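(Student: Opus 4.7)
The first bullet will follow from the second and third: limit groups (and $\pi_1(M)$-limit groups) admit faithful discrete representations into $\sl2c$ with every nontrivial element loxodromic, so composing a noninjective, non-$k$-pinching map $\pi_1(S) \to L$ with such a representation produces the asserted representation of $\pi_1(S)$. The substance of the theorem therefore lies in the second and third bullets.

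For the second bullet, the plan is to decompose $\pi_1(S)$ via a separating simple closed curve $c$, writing $\pi_1(S) = A *_{\langle c \rangle} B$ with $A, B$ finitely generated free; the assumption that the genus of $S$ is at least three allows both subsurfaces to be of positive genus, with at least one of genus at least two. I would replace the right factor by a noninjective quotient $q : B \twoheadrightarrow \bar B$ to a free (or more generally limit) group preserving $c$, and pass to
\[
L := A *_{\langle c \rangle} \bar B, \qquad \varphi : \pi_1(S) \longrightarrow L,
\]
where $\varphi$ is induced by $\mathrm{id}_A$ on the left and $q$ on the right. Then $\varphi$ is noninjective, and $L$ is a limit group by the standard closure of the class of limit groups under amalgams of limit groups over cyclic subgroups that are malnormal in each factor (Sela's generalized doubles); the malnormality can be arranged by the choice of $c$ and of $q$.

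The principal obstacle is verifying non-$k$-pinching. Any curve $\gamma$ with self-intersection at most $k$ can be put in minimal position with $c$, yielding a normal form in the amalgam whose $B$-syllables are homotopy classes of arcs in $S_2$ with endpoints on $c$. For $\varphi(\gamma)$ to vanish in $L$, these $B$-syllables must collapse modulo the edge group in $\bar B$, which is a highly constrained condition. The geometric input I would need is a structural description, parameterized by $k$, of the arc-types and syllable patterns that can arise from $k$-simple curves on $S$; once such a family has been identified, standard free-group surgery (Whitehead-type moves on the target) produces a quotient $q$ that is injective on this family but kills some specific long element outside it. The technical heart of the argument is thus the geometric/combinatorial control of $k$-simple curves relative to $c$, rather than the algebraic closure of limit groups.

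For the third bullet, the same strategy is implemented with one of the vertex groups in the splitting replaced by $\pi_1(M)$, exploiting that the figure-eight knot complement is hyperbolic and contains malnormal cyclic subgroups (for instance, maximal cyclic subgroups away from the cusp) into which one can amalgamate a free factor; an immersed incompressible subsurface of $M$ provides the starting embedding to build the map from $\pi_1(S)$. The algebraic richness of $\pi_1(M)$ compensates for the reduction in genus from three to two.
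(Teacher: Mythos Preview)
Your reduction of the first bullet to the other two is broadly right in spirit, but note that the paper does not take for granted that limit groups (or $\pi_1(M)$-limit groups) admit faithful all-loxodromic representations; this is proved via a Baire category argument in the representation variety (and the resulting representations are typically \emph{indiscrete}, so your ``discrete'' is incorrect and unnecessary). More importantly, your proposal contains a genuine gap at exactly the point you flag as the principal obstacle: you do not actually establish non-$k$-pinching. Saying that one would need a structural description of arc-types of $k$-simple curves and then apply ``Whitehead-type moves'' is not a proof; it is not clear that any such finite-family analysis can be made to work uniformly in $k$, and you give no mechanism for producing the required quotient $q$.

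The paper's route is structurally different and avoids this difficulty. Rather than a separating curve, it takes a \emph{nonseparating} four-holed sphere $S'\subset S$ (a pair of pants in genus two) and collapses only $S'$ via a map $f\colon S'\to X$. A short lemma (an innermost-disk argument) shows that if $f$ is $(4k+4)$-non-pinching on $S'$ then the induced map $S\to S/f$ is $k$-non-pinching on the closed surface; this reduces the problem to finding maps of a fixed bounded subsurface that are $N$-non-pinching for arbitrarily large $N$. That, in turn, is handled by producing an explicit sequence $f_n\colon\pi_1(S')\to\free_2$ (or $\to\pi_1(M)$ in genus two) and proving, via convergence to an action on a real tree and an analysis with the Rips machine, that $f_n$ is eventually non-$N$-pinching for every $N$. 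The limit-group property of $\pi_1(S/f)$ is then verified by exhibiting a strict retraction (using the reflection swapping $S'$ and its complement in genus three), not by a general malnormal-amalgam statement. Your separating-curve amalgam $A*_{\langle c\rangle}\bar B$ would also require such a strict map to a lower-level limit group, which you have not supplied.
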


\begin{corollary}
  \label{maintheorem2}
  Given $k$ and a compact orientable surface $S$ of genus at least
  two, then there is a nonfaithful, non-$k$-pinching, all-loxodromic
  representation $\pi_1(S)\to \sl2c$.
\end{corollary}

\begin{remark}
  In the first bullet of Theorem~\ref{maintheorem} $L$ must depend on
  $k$: Every noninjective map $\pi_1(S)\to L$ factors through, up to
  precomposition by an element of $\Mod(S)$, one of finitely many fixed limit
  groups. If $g$ has self-intersection number $k$ then, for all
  $\varphi\in\Mod(S)$, $\varphi(g)$ has self intersection number $k$, hence
  every map $\pi_1(S)\to L$ kills an element of definite intersection number.
\end{remark}

As a corollary of the proof of Theorem~\ref{maintheorem} we also have the
following:

\begin{corollary}
  \label{cor:elementary} The property of admitting only $k$-pinching
  noninjective maps of surface groups is not closed under elementary
  equivalence.
\end{corollary}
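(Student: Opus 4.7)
The plan is to exhibit two elementarily equivalent groups, one with the property of admitting only $k$-pinching noninjective maps from surface groups and one without. The natural candidates are a non-abelian free group $\free$ and (a group built from) a limit group $L$ provided by Theorem~\ref{maintheorem}.

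First I would verify that $\free$ has the property. Realizing $\free$ as the fundamental group of a handlebody $H$ and applying Papakyriakopoulos' loop theorem, any noninjective map $\pi_1(S)\to\free$ is homotopic to one that compresses along an embedded disk, hence kills a \emph{simple} closed curve. Since a simple closed curve is $k$-simple for every $k$, $\free$ admits only $k$-pinching noninjective maps, for every $k\geq 0$.

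Next, by Theorem~\ref{maintheorem} there is a limit group $L$ and a noninjective non-$k$-pinching homomorphism $\pi_1(S)\to L$, so $L$ lacks the property. It remains to produce, from $L$, a group elementarily equivalent to $\free$ that still lacks the property. Here I would appeal to the solution of the Tarski problem by Sela and by Kharlampovich--Myasnikov: the finitely generated groups elementarily equivalent to a non-abelian free group are precisely the hyperbolic towers (equivalently, NTQ groups) over $\free$. If the $L$ produced by the main theorem is itself such a tower, we are done. Otherwise I would embed $L$ into a hyperbolic tower $T$ (for instance via an iterated centralizer extension); the composition $\pi_1(S)\to L\hookrightarrow T$ is still noninjective, and a $k$-simple curve whose image is nontrivial in $L$ has nontrivial image in $T$ because $L\hookrightarrow T$ is injective. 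Thus $T$ does not have the property, while $T\equiv\free$ does, proving the corollary.

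The only step beyond the main theorem and classical loop-theorem input is the verification that the specific $L$ produced by Theorem~\ref{maintheorem} can be placed inside a hyperbolic tower; I expect this to be routine from the construction of $L$ in the paper (or from the general structure theory of limit groups), so the main work of the corollary is really packaged inside the main theorem and the Sela/Kharlampovich--Myasnikov classification.
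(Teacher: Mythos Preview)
Your main line of argument is the same as the paper's: the specific limit group $L=\pi_1(S/f)$ built in the proof of the second bullet of Theorem~\ref{maintheorem} (via Lemma~\ref{lem:nonpinchingcriterion} and Example~\ref{alphamaps}) is already a hyperbolic tower over the free group $\pi_1(X)$, so by Sela's Theorem~\ref{sela::towers} it is elementarily equivalent to $\free$, while $\free$ itself has only pinching noninjective maps from closed surfaces. The paper simply asserts this last fact; your appeal to the loop theorem is a fine justification.

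One correction to your fallback: the suggestion to embed an arbitrary limit group $L$ into a hyperbolic tower ``for instance via an iterated centralizer extension'' does not work. Iterated centralizer extensions of free groups contain $\zee^2$ subgroups, whereas hyperbolic towers over $\free$ are hyperbolic (they are built by gluing surfaces to a hyperbolic base along cyclic subgroups) and so contain no $\zee^2$. In particular, a limit group with a noncyclic abelian subgroup cannot embed in any group elementarily equivalent to $\free$, so there is no general embedding trick available here. Fortunately this fallback is unnecessary: as you anticipated, the construction in the paper produces $L$ directly as a tower.
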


Sela characterizes the finitely generated groups elementarily
equivalent to a free group $\free$ as \emph{hyperbolic towers} over
$\free$. Some of the examples of limit groups constructed to prove the
second bullet of Theorem~\ref{maintheorem} are towers over a free
group and admit non-$k$-pinching maps of closed surfaces.  On the
other hand, every (noninjective!) map from a closed surface to a free
group kills a simple closed curve.

\begin{question}
  \label{question::genus2}
  Does every noninjective map from the fundamental group of the closed
  genus two surface to a limit group kill a simple closed curve?
\end{question}

\subsection*{Acknowledgments}

I would like to thank Jason Manning for pointing out Hempel's theorem,
Matt Stover for suggesting the figure-eight knot complement, Chlo\'e
Perin for telling me Delzant's question, Mladen Bestvina for sharing
with me his and Mark Feighn's notes on negligible sets in the free
group, the referee for finding a number of places where the paper
could be improved, and Juan Souto for yelling at
me.

\section{Background}

\begin{definition}[Intersection number/Pinching]
  Let $S$ be a surface, and $c$ a free homotopy class of curves on
  $S$. The self intersection number of $c$ is the minimal number of
  points of self intersection of curves without triple points in the
  homotopy class $c$. We call a free homotopy class of curves $c$
  $k$-simple if its self-intersection number is at most $k$. We say
  simple rather than $0$-simple.

  Let $S$ be a surface, possibly with boundary.  A continuous map
  $S\to X$ is $k$-pinching if the image of some $k$-simple curve is
  homotopically trivial in $X$.  A map $S\to X$ is non-$k$-pinching
  if it is not $k$-pinching. The same terms will be used for
  homomorphisms of surface groups.

  We write pinching rather than $0$-pinching.
\end{definition}

\begin{definition}[Limit group]
  Let $\Gamma$ be a torsion free (toral relatively) hyperbolic
  group. A finitely generated group $G$ is \emph{fully residually
    $\Gamma$}, or a $\Gamma$-\emph{limit group}, if, for each finite
  $S\subset G$, there is a homomorphism $f\colon G\to \Gamma$
  embedding $S$. If we write \emph{limit group} without the modifier
  we mean an $\free$-limit group. Let $G$ be a finitely generated
  group, and let $f_n\colon G\to \free$ be a sequence of maps from $G$
  to a free group. We say that $f_n$ is \emph{stable} if, for all $g$,
  there is $n_g$ such that $f_m(g)=1$ for $m>n_g$ or $f_m(g)\neq 1$
  for $m>n_g$. The stable kernel $\stabker(f_n)$ of a stable sequence
  is the collection of elements whose images are eventually
  trivial. Alternatively, by~\cite{sela::dgog1}, $L$ is a limit group
  if and only if $L$ is the quotient of a finitely generated group by
  the stable kernel of a stable sequence, i.e., $L=G/\stabker(f_n)$
  with $f_n$ stable. In this case we say that $f_n$ converges to
  $L$. Eventually $f_n$ factors through the quotient map
  $\pi\colon G\onto L$\cite[Theorem~1.17(iii)]{sela::dgog7}.
\end{definition}

Let $X$ be a space, and let $Y\subset X$. If $f\colon Y\to Z$, denote
by $X/f$ the space $(X\sqcup Z)/(y\sim f(y))$.

\begin{definition}[Graph of surfaces]
  We call a space $X$ a \emph{graph of surfaces} if there are
  connected spaces $X_1,\dotsc,X_n$, (``vertex spaces'') compact
  surfaces with boundary $\Sigma_1,\dotsc,\Sigma_m$, either with Euler
  characteristic at most $-2$ or homeomorphic to a torus with one
  boundary component, and annuli $A_1,\dotsc,A_l$, such that
  $X=(\sqcup X_i\sqcup\Sigma_j\sqcup A_k)/\sim$, where $\sim$ is the
  equivalence relation generated by a map
  $\sqcup\partial\Sigma_j\sqcup\partial A_k\to\sqcup X_i$ with the
  property that each boundary component is mapped to a
  non-nullhomotopic loop in $\sqcup X_i$, and such that for at least
  one boundary component $C$ of an annulus $A$ then the image of $c$
  is maximal abelian in $X\setminus A^{\circ}$.
\end{definition}

Let $X$ be a graph of surfaces and suppose $\pi_1(Y)$ is a
$\Gamma$-limit group.  A map $X\to Y$ embedding the fundamental group
of each $X_i$ and sending each $\pi_1(\Sigma_j)$ to a nonabelian
subgroup of $\pi_1(Y)$ is \emph{strict}.
  
\begin{theorem}[{\cite[Theorem~1.31]{sela::dgog7}}]
  If there is a strict map $X\to Y$, and $\pi_1(X)$ is f.g., then
  $\pi_1(X)$ is a $\Gamma$--limit group.
\end{theorem}

The proof in the toral relatively hyperbolic case is the same as in
the free case. 

If, in a graph of surfaces, there is only one vertex space $X_1$ and
there is a strict retraction $X\to X_1$ then $X$ is a \emph{floor}
over $X_1$. If $X$ admits a filtration $Y=Y_1\subset
Y_2\subset\dotsb\subset Y_n=X$ such that $\pi_1(Y_1)$ is a torsion
free hyperbolic group and $Y_i$ is a floor over $Y_{i-1}$ then $X$ is
a \emph{tower} over $Y$. If a floor has no annuli it is a
\emph{hyperbolic floor}, and if a tower only has hyperbolic floors
then it is a \emph{hyperbolic tower}

If $G<H$ and there are spaces $Y$ and $X$ such that $X$ is a tower
over $Y$, and identifications $H=\pi_1(X)$ and $G=\pi_1(Y)$ such that
the inclusion $Y\into X$ induces the inclusion map $G\into H$ then we
say that $H$ is a tower over $G$.

\begin{theorem}[\cite{sela::dgog6,sela::dgog7}]
  \label{sela::towers}
  If $X$ is a hyperbolic tower over $Y$ and $\pi_1(Y)$ is a
  non-trivial torsion free hyperbolic group then $\pi_1(X)$ and
  $\pi_1(Y)$ are elementarily equivalent.
\end{theorem}

\section{Outer homomorphisms/Graphs}

We assume familiarity with Stallings folding~\cite{stallings0} and
immersions of graphs.

\begin{definition}
  Let $S$ be a compact surface, possibly with boundary. The
  \emph{modular group} of $S$, $\Mod(S)$, is the group of
  self-homeomorphisms of $S$ fixing the boundary of $S$, up to homotopy. Clearly
  $\Mod(S)<\Out(\pi_1(S))$.

  Let $G$ and $H$ be groups. The set of \emph{outer homomorphisms}
  $\outhom(G,H)$ is the set of homomorphisms $f\colon G\to H$ modulo
  conjugation in $H$. Precomposition of homomorphisms by elements of
  $\Out(G)$ gives a well defined action of $\Out(G)$ on
  $\outhom(G,H)$.

  We say that two homomorphisms $f,g\colon\pi_1(S)\to G$ are
  $\Mod(S)$--inequivalent if they represent different equivalence
  classes in $\outhom(\pi_1(S))/\Mod(S)$.
\end{definition}

\begin{definition}[Track]
  Let $S$ be a surface with boundary. A \emph{track} in $S$ is a one-manifold
  with boundary $T\subset S$ such that $\partial T\subset\partial S$. A
  \emph{labeled track} in $S$ is a track in $S$ such that each connected
  component of $T$ is equipped with a transverse orientation and a letter from
  some alphabet $a,b,\dotsc$. 
\end{definition}

\par If $c$ is an oriented curve in $S$ transverse to a labeled track
$T$ there is a (conjugacy class of) word(s) $w_T(c)$ in the alphabet
$a,b,\dotsc$ obtained by reading the labels from $T$, with orientations, as $c$
is traversed. Let $f\colon \pi_1(S)\to\langle a,b\rangle$ be a morphism of free
groups. Represent $\langle a,b\rangle$ as the fundamental group of a rose $R$
with two oriented edges $a$ and $b$. There is a continuous map $r\colon S\to R$,
representing the outer homomorphism represented by $f$, which we may assume is
transverse to midpoints $m_a$ and $m_b$ of the edges $a$ and $b$. Let
$T=r^{-1}(m_a,m_b)$, and give $T$ the structure of a labeled track induced by
the orientations of $a$ and $b$, with the labels $a$ and $b$ assigned according
to whether a component is mapped to $m_a$ or $m_b$. If $c$ is a simple closed
curve in $S$ we may assume without loss of generality that the word $w_T(c)$ is
cyclically reduced, and that words representing boundary components are
cyclically reduced as well. (Note that we cannot do this for all curves
simultaneously.)

\par Let $T$ be an oriented track in a surface with boundary $S$, and
let $\Gamma_T$ be the oriented graph with vertex set $S\setminus T$
with an oriented labeled edge for each connected component of
$T$. There are natural maps (defined up to homotopy) $S\to \Gamma_T$
and $\Gamma_T\to R$.

\begin{definition}
  If $w$ is cyclically reduced, say that $u$ is a piece of $w$ if $w$
  can be written without cancellations as $u_2vu_1$ and $u_2'v'u_1'$
  with $u=(u_1u_2)^{\pm 1}=(u_1'u_2')^{\pm 1}$ and $u_2\neq u_2'$. For
  $w$ cyclically reduced, define
  \[
  o(w)=\max\{\vert u\vert/\vert w'\vert\mid u \mbox{
    is a piece of }w\}
  \]
  and for arbitrary $w$ define $o(w)=o(w')$, where $w'$ is any
  cyclically reduced conjugate of $w$.
 \end{definition}

\section{Surfaces with boundary}

In this section we show that there are many non-injective
non-$k$-pinching maps from surfaces with boundary to free groups, hyperbolic groups
and the fundamental group of the figure-eight knot complement. In the next
section we extend them to closed surfaces.

\begin{theorem}
  \label{lem:nonpinchingsequence}
  Let $S$ be a four-times punctured sphere, and let $b_1,\dotsc,b_4$
  be generators for four nonconjugate boundary subgroups. Let $f_n\colon
  \pi_1(S)\onto F_2=\langle a,b\rangle$ be a sequence of surjective pairwise
  $\Mod(S)$-inequivalent homomorphisms such that the images $f_n(b_i)$
  are nontrivial, $f_n(b_i)$ and $f_n(b_j)$ have nonconjugate
  centralizers for $i\neq j$, and $o(f_n(b_i))\fontop{n\to\infty} 0$ if $\vert
  f_n(b_i)\vert\to\infty$. Then $f_n$ is eventually non-$k$-pinching
  for any $k$.
\end{theorem}

\mnote{strictly speaking precomposition by an element of mod(s) does not in general preserve the hypotheses on fn in particular the fact that o of boundary components to 0 might not remain true after precomposition. this probably comes from a problem with my definition of o(w)}

\begin{proof}
  Nonconjugacy of centralizers of images of boundary components
  guarantees that each of the maps $f_n$ is non-pinching.

  Precomposition by elements of $\Mod(S)$ doesn't change nonconjugacy
  of centralizers of images of boundary components, surjectivity of
  the maps $f_n$, or the conjugacy classes of images of elements of
  boundary components. Since there are only finitely many $k$-simple
  closed curves up to the action of $\Mod(S)$, hence if $f_n$ is
  eventually $k$-pinching then we may assume, by passing to a
  subsequence and precomposing by elements of $\Mod(S)$, that each
  $f_n$ kills some fixed element $g\in\pi_1(S)$. Pass to a stable
  subsequence, also denoted $f_n$, converging to a proper limit
  quotient $L$ of $\pi_1(S)$, that is, $f_n=\overline f_n\circ\pi$,
  where $\pi\colon\pi_1(S)\onto L$ is the quotient map~\cite[Proof of
  Theorem~4.6]{sela::dgog1}. The sequence $\overline f_n$ converges to
  a faithful stable action of $L$ on a real tree $T$, and we use the
  Rips machine to analyze the action of $L$ on $T$.

  Let $X$ be a finite complex with fundamental group $L$ with the
  property that for each $i$ the conjugacy class of $\pi(b_i)$ is
  represented by a reduced edge path $p_i$ in the one skeleton, and
  such that $p_i$ and $p_j$ have disjoint images for $i\neq j$. Let
  $\tilde X\to T$ be a resolving map as
  in~\cite[Proposition~5.3]{bf:sa}, however choose the map to send
  each lift of $p_i$ homeomorphically to the axis of the appropriate
  conjugate of $b_i$ if $b_i$ acts hyperbolically, and to a point if
  $b_i$ acts elliptically, in $T$.

  Then $X$ has the structure of a band
  complex~\cite[Definition~5.1]{bf:sa} with transverse measure
  $\mu$. Let $Y\subset X$ be the union of bands associated to
  $X$. Each $p_i$ may be written as a composition of paths
  $v_0h_0\dotsb v_{k-1}h_{k-1}$, where each $h_j$ is a horizontal path
  in $Y$, $\mu(v_j)=0$, the translation length of $b_i$ in $T$ is
  $\mu(h_1)+\dotsb+\mu(h_k)$, each lift of $h_j$ in $\tilde X$ is
  embedded in $T$ under the resolving map, and for every lift of
  $h_jv_{j+1}h_{j+1}$ the resolving map sends the lifts of $h_j$ and
  $h_{j+1}$ to arcs intersecting in a point. We call such a path
  \emph{immersive}. If $p$ is an immersive path and $X$ and $X'$
  differ by Rips moves then the associated path $p'$ in $X'$ is
  immersive as well.

  The output of running the Rips machine on $X$ is a band complex,
  which we also call $X$, such that each minimal component of $X$ is
  either toral, thin, or surface type. We leave the following claim to
  the reader as an exercise in Gromov-Hausdorff convergence: Let
  $f_n\colon L\to\free$ be a sequence of homomorphisms converging to
  $L$, and suppose $p$ is an immersive edge path in the band complex
  $X$ associated to the action of $L$ on the limiting tree $T$. If
  $p_i$ meets (has positive measure in) a minimal component of $Y$, or
  if $p_i$ meets a simplicial component with nontrivial fundamental
  group, then $o(f_n(b_i))>\epsilon>0$ for some fixed
  $\epsilon$. See~\cite[{\S7}]{bf::lg}.

  \mnote{contradicting the fact that fn is nonpinching. it would be
    clearer to say that this gives a contradiction to nonconjugacy of
    images of boundary components, which is the reason fn is
    nonpinching, but i haven't mentioned this yet.}

  Suppose $Y$ has a minimal component $Z$. If $Z$ is thin-type then
  $L$ splits over the trivial group relative to $\pi(b_i)$,
  contradicting the fact that $f_n$ is non-pinching. If $Z$ is surface
  type then $L$ is free of rank two and $\pi(b_i)$ are all contained
  in the commutator subgroup, contradicting the fact that they
  generate in first homology, and if $Z$ is toral type then $L$ is
  either $\zee*\zee^2$ or $\zee^3$. \mnote{the second case is rulded
    out by nonconjugace: isn't it rather by the fact that the images
    generate in homology? in fact probably all the cases with a toral
    component can be excluded by this argument, just as for a surface
    component} The second case is ruled out by nonconjugacy of
  centralizers of images of boundary subgroups, and in the first case
  $L$ again splits freely relative to the images of boundary subgroups
  and the map $\pi_1(S)\to L$ kills a simple closed curve.

  \mnote{all compoennts: need to justify that all the components of Y
    are crossed by a pi, possibly using that the b's generate} Thus
  all components of $Y$ are simplicial and carry the trivial
  fundamental group, and since $o(f_n(b_i))\to 0$ whenever $b_i$ acts
  hyperbolically in $T$, each $p_i$ meets any given simplicial
  component of $Y$ either once or not at all. Furthermore, each
  component of $Y$ gives a splitting of $L$ over the trivial
  group. Replace $X$ by the band complex obtained by collapsing each
  leaf to a point. Leaves of $Y$ carry the trivial group, the quotient
  map is an isomorphism on $\pi_1$, therefore we may assume that that
  each component of $Y$ is an interval. Furthermore, again without
  changing the fundamental group, we may assume that the complementary
  components of the interior of $Y$ are either points, circles or
  tori.
  
  \mnote{the justification that all the bi's are hyperbolic seems
    insufficient. what is clear is that they cannot all by in
    conjugates of the xsquared corresponding to the torus sinc ethey
    generate in homology and that if one is hyperbolic then at least
    two of them are hyperbolic. the case where only two of them are
    hyperbolic corresponds to the case where there is only one arc:
    need to also consider the one arc case. DO I??} If a complementary
  component is a torus then $L\cong \zee^2*\zee$ and all $b_i$ are
  hyperbolic in $T$, otherwise a simple closed curve in $S$ has
  trivial image, contradicting the fact that $f_n$ is nonpinching. Let
  $m$ be a point in the interior of $Y$, and represent the map
  $\pi_1(S)\to L$ by a continuous map $g\colon S\to X$ extending the
  induced maps $p_i$ on boundary components. The map $g$ may be chosen
  so that $g^{-1}(m)$ consists of at most two arcs connecting distinct
  pairs of boundary components of $S$, and induces a measured
  foliation $\Lambda$ on $S$. If there is one arc then $f_n(b_i)$ and
  $f_n(b_j)$ have conjugate centralizers for some $i\neq j$ and
  sufficiently large $n$, thus there are two arcs, but the complement
  of the two arcs is homotopy equivalent to a circle, but there are no
  maps $\zee\onto\zee^2$.

  Thus we may assume that $X$ is a graph of rank two. By collapsing we
  may arrange that no edge not contained in $Y$ is embedded. Clearly
  $X$ is either a barbell graph, a theta graph, or a rose. See
  Figure~\ref{fig::xoptions}, where $Y$ is represented by bold
  subgraphs.

\begin{figure}[ht]
  \centerline{
    \labellist
    \pinlabel $p$ at 486 55
    \pinlabel $q$ at 607 56
    \endlabellist
    \includegraphics[width=.7\textwidth]{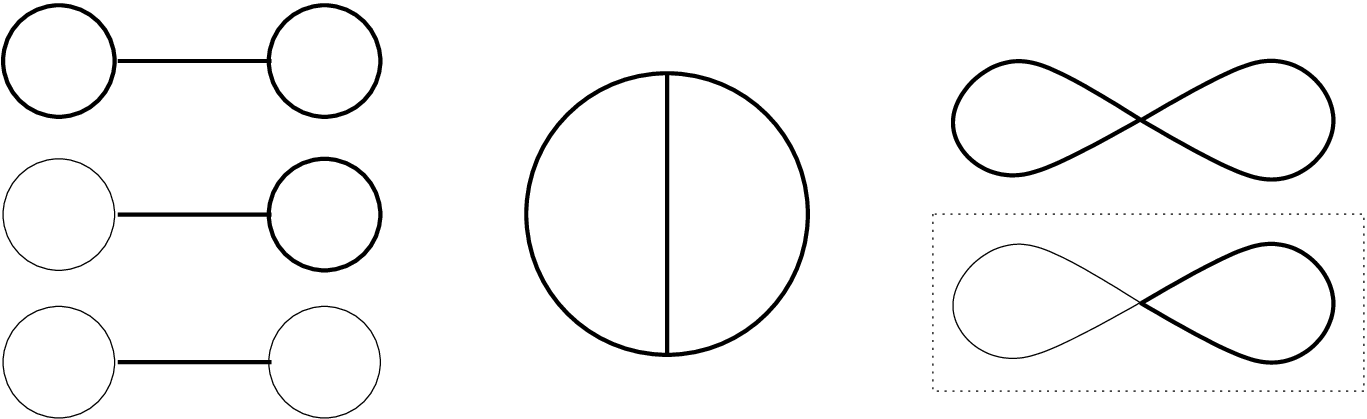}
  }
\caption{Possibilities for $X$.}
\label{fig::xoptions}
\end{figure}

  If $X$ is a barbell graph then the middle edge must be a component
  of $Y$. Each $b_i$ crosses each edge of $Y$ at most one time, hence
  each $b_i$ is contained in one of two embedded subgraphs, each of
  which is a circle, again contradicting the condition that the $b_i$
  have pairwise nonconjugate centralizers under all $f_n$.
  
  Suppose $X$ is a theta graph. Then all edges are components of
  $Y$. The theta graph has only three distinct embedded paths,
  contradicting the condition on nonconjugate centralizers of images,
  hence the only possibility is that $X$ is a rose.

  Suppose $X$ is a rose and that both edges are components of $Y$, and
  let $c$ and $d$ be the edges. There are four distinct closed
  embedded (except at vertices) paths $p$, $q$, $pq$, and $pQ$ in
  $X$. Since the $b_i$ have pairwise nonconjugate centralizers and sum
  to zero in $\zee_2$ homology of $S$, we must have
  \[
  0=\left[p+q+pq+pQ\right]=\left[p+q\right]
  \] 
  in $\mathrm{H}_1(X,\mathbb{Z}_2)$, which is a contradiction.  Thus
  $X$ is a rose and $Y$ consists of one edge. 

  Suppose some $b_i$ acts elliptically in $T$. Then $b_i$ is sent to a
  path in $X$ which misses $Y$, and since each $b_i$ crosses $Y$ at
  most one time, and since there are only four boundary components,
  some other $b_j$, $j\neq i$, is also elliptic in $T$, contradicting
  the hypothesis that $f_n(b_i)$ have nonconjugate centralizers. Thus
  each $b_i$ crosses $Y$ exactly once. As above, let $g\colon S\to X$
  be a continuous map representing $\pi$, and let $c$ be the
  non-boundary parallel simple closed curve in the complement of
  $g^{-1}(m)$. The simple closed curve $c$ is elliptic in $T$, and we
  may assume that $g(c)$ does not meet the interior of $Y$.
  
  Let $T_n$ be the labeled track in $S$ associated to the map
  $f_n$. As above, we may assume that $w_{T_n}(b_i)$ and $w_{T_n}(c)$
  are cyclically reduced\mnote{this is true only if the definition of
    o is changed as suggested as above}.  Since $f_n$ is nonpinching,
  we may assume that no component of $T_n$ is a circle, and since $c$
  is elliptic in $T$, and the $b_i$ are not elliptic in $T$, for large
  $n$, the track $T_n$ has no subarcs connecting $c$ to itself.

  Let $A$ be an annular regular neighborhood of $c$. After perhaps
  precomposing by a Dehn twist supported in the annulus $A$, and
  passing to a subsequence of $f_n$, the track $T_n$ has the form
  illustrated in the middle of
  Figure~\ref{fig::pairsofpantsdecomp}. In particular, up to homotopy,
  $f_n$ factors as
  \[
    f_n=h_n\circ\sigma\circ\tau^{k_n}
  \]
  where $h_n\colon \Gamma\to R$ is a morphism from some \emph{fixed}
  graph $\Gamma$, illustrated at the right of
  Figure~\ref{fig::pairsofpantsdecomp}, to the rose, given by
  assigning elements $E_n$, etc., of $\langle a,b\rangle$ to each edge
  $E$, etc., and $\tau^{k_n}$ is a power of the Dehn twist in $A$.
  Furthermore, the concatenations of words $E_nB_nG_n$,
  $A_nG_n^{-1}F_n^{-1}$, $F_nB_n^{-1}$, and $A_nE_n$ are all
  cyclically reduced and represent the images of the boundary
  components. The subgraphs with labels $E_n,F_n,G_n,A_n$ and
  $E_n,F_n,G_n,B_n$ immerse in $R$ under $h_n$. Note that in general
  $E_n$, $F_n$ or $G_n$ may be trivial. The $f_n$ all kill some
  fixed element $w$, so $h_n\circ\sigma$ kills $\tau^{-k_n}(w)$, but,
  for sufficiently large $k_n$, $\tau^{-k_n}(w)$ takes no turns in
  $\Gamma$ (in the case we are considering, $A^{-1}B$) which are
  folded by $h_n$. Thus there are only finitely many $\{{k_n}\}$. By
  passing to a further subsequence and taking a fixed Dehn twist we
  may assume that $f_n$ factors as $h_n\circ\sigma$.

  \begin{figure}[ht]
  \labellist
  \pinlabel $\tau^{k_n}$ [b] at 359 120
  \pinlabel $\sigma$ [b] at 771 120
  \pinlabel $A_n$ [r] at 927 116
  \pinlabel $B_n$ [tr] at 1136 222
  \pinlabel $E_n$ [br] at 858 162
  \pinlabel $F_n$ [l] at 1029 138
  \pinlabel $G_n$ [tl] at 1013 33
  \pinlabel $\Gamma$ at 873 252
  \endlabellist
  \centerline{
  \includegraphics[width=\textwidth]{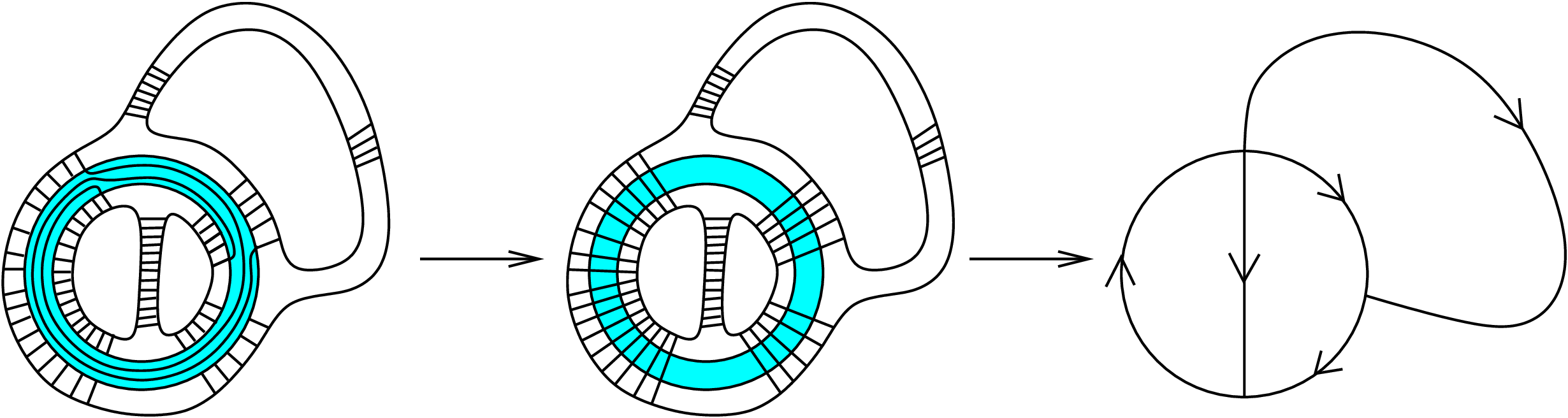}
  }
  \caption{Each track $T_n$ has the same form.}
\label{fig::pairsofpantsdecomp}
\end{figure}

Without loss of generality, we may write $A_n$ and $B_n$ as reduced
products $A_n=C_nL_n$, $B_n=C_nM_n$. Suppose that, without loss,
$\vert B_n\vert/\vert A_n\vert\to\infty$. Then the subgroup
corresponding to the subgraph spanned by edges labeled $A,E,F,G$ acts
elliptically in $T$, hence has abelian image under all $f_n$,
contradicting again nonconjugacy of centralizers. Thus there are
constants $\infty>K,U>0$ such that $K\leq \vert B_n\vert/\vert
A_n\vert\leq U$ for all $n$. Furthermore, since $C_n$ corresponds to
the subset of $S$ mapping to $Y$, we have
\begin{equation}
  \vert C_n\vert/\max\{\vert E_n\vert,\vert F_n\vert,\vert G_n\vert,\vert L_n\vert,\vert M_n\vert\}\to\infty
\label{cinequality}
\end{equation}

Let $\Gamma'$ be the graph obtained by folding initial segments corresponding to
edges labeled $A$ and $B$ together, and let $\overline h_n\colon\Gamma'\to R$ be
the induced map. See Figure~\ref{factorfold}.

\begin{figure}
\labellist
\pinlabel $A_n$ [r] at 95 108
\pinlabel $B_n$ [bl] at 321 209
\pinlabel $E_n$ [br] at 19 136
\pinlabel $F_n$ [l] at 187 155
\pinlabel $G_n$ [tl] at 182 22
\pinlabel $E_n$ [br] at 492 136
\pinlabel $F_n$ [bl] at 662 155
\pinlabel $G_n$ [t] at 646 18
\pinlabel $L_n$ [r] at 575 31
\pinlabel $M_n$ [l] at 720 31
\pinlabel $C_n$ [r] at 571 108
\endlabellist
\centerline{
\includegraphics[width=.8\textwidth]{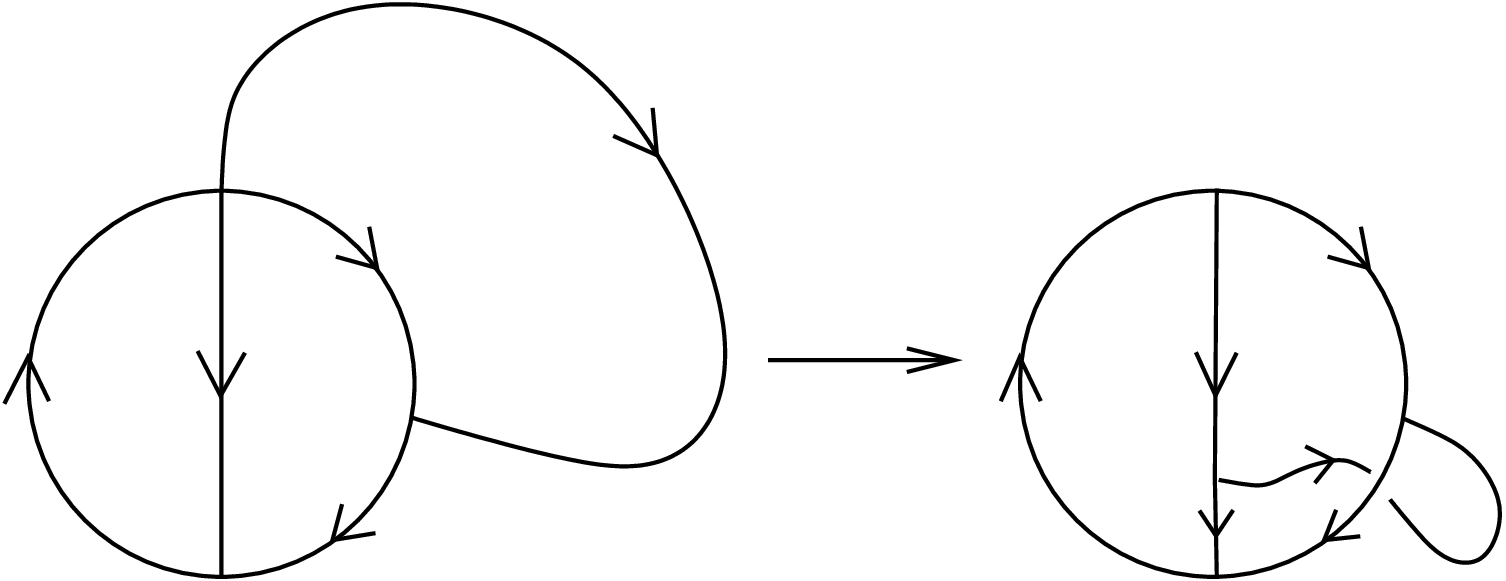}
}
\caption{The maps $h_n$ factor through a fixed quotient map $\Gamma\to\Gamma'$.}
\label{factorfold}
\end{figure}

By~(\ref{cinequality}) the subgroup of $\pi_1(S)$ corresponding to the
subgraph $\Gamma'_e$ spanned by $E$, $F$, $G$, $L$, and $M$ acts
elliptically in the limiting tree. The only subgroup acting
elliptically in $T$ is infinite cyclic, therefore, for all $n$, the
subgraph $\Gamma'_e$ folds to a circle. The tails of $L_n$ and $M_n$
do not fold with $E_n$, $F_n$, and $G_n$, therefore $A_n=B_n=C_n$ and
$L_n=M_n=1$. Then there is a cyclically reduced word $K_n$ such that
$E_nF_n$, and $G_n$ are powers of $K_n$. The tail of $C_n$ doesn't
fold with the head of $E_n$ or the tail of $F_n$, and the head of
$C_n$ doens't fold with the tail of $E_n$ or the head of $F_n$, thus
$f_n$ factors through one of the immersions illustrated in
Figure~\ref{fig::immersion}. \mnote{this is clearly not true
  (surjectivity) clearly Kn An must form a basis but there is no
  reason it should be the standard one. clarify: the referee is wrong
  here. everything is arranged so that there is no folding and then it
  has to be the standard basis. ...} Such a homomorphism is surjective
only if $K_n=a$ and $A_n=B_n=b$ (or vice-versa), but then, say,
$f_n(b_1)=a^{t_n}b$, contradicting the fact that $o(f_n(b_1))\to 0$.
\end{proof}

\begin{figure}
  \labellist
  \pinlabel $C_n$ [bl] at 981 313
  \pinlabel $K_n$ [br] at 614 298
  \pinlabel $a$ [r] at 1184 271
  \pinlabel $b$ [bl] at 1535 323
  \pinlabel $\immerses$ at 1082 280
  \endlabellist
  \centerline{
    \includegraphics[width=.8\textwidth]{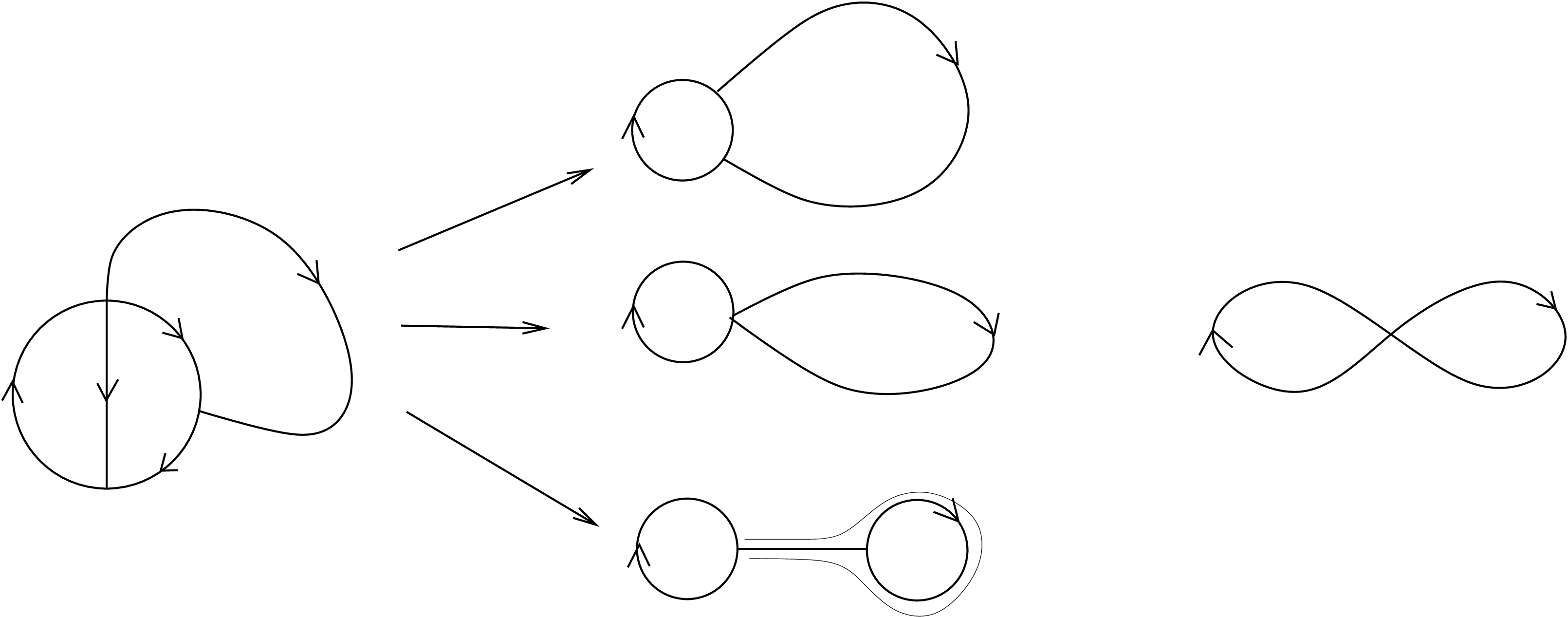}
  }
  \caption{The maps $h_n$ factor through one of three types of
    immersions. Only the center type appears, and then, by
    surjectivity of $f_n$, only if $K_n=a$ and $A_n=B_n=C_n=b$, or
    vice-versa.}
  \label{fig::immersion}
\end{figure}

\begin{example}[``$\alpha$'' maps]
  \label{alphamaps}
  Identify the fundamental group of a four-times punctured sphere with
  $\free_3=\langle x,y,z\rangle$ in such a way that $x$ $y$ and $z$
  are three boundary components, and $xyz$ is the fourth. Let $f_n$ be
  the map $x\mapsto a$, $y\mapsto b$, $z\mapsto aba^2ba^3\dotsb
  ba^nb$. Then $f_n$ satisfies the hypothesis of
  Theorem~\ref{lem:nonpinchingsequence} since the concatenation
  $f_n(x)f_n(y)f_n(z)$ is cyclically reduced and $o(f_n(z))\sim
  o(f_n(xyz))\sim 1/n$.
\end{example}

\par In \S\ref{sec::proofmain} we embed this construction in a closed surface
to find non-$k$-pinching quotients of closed surface
groups. Appropriately chosen, these will be limit groups and
embed in $\sl2c$.

\begin{lemma}
  \label{lem::killsimplesubgroup}
  Suppose $f\colon\langle x,y\rangle=F\onto L$ is a non-injective map
  such that $L$ splits as $V*_E$ or $V*_EW$ with $f(y)\in V$. Then
  $f\vert_{\langle xyx^{-1},y\rangle}$ is not injective.
\end{lemma}

\begin{proof}
  See Figure~\ref{fig::dunwoody}. Follows from Dunwoody folding
  sequences~\cite{dunwoody::folding}.
\end{proof}

\begin{figure}[ht]
\labellist
\pinlabel $x$ [tl] at 106 21
\pinlabel $y$ [l] at 78 134
\pinlabel $\langle y,xyx^{-1}\rangle$ [b] at 253 155
\pinlabel $V$ at 487 105
\endlabellist
\centerline{
\includegraphics[width=.7\textwidth]{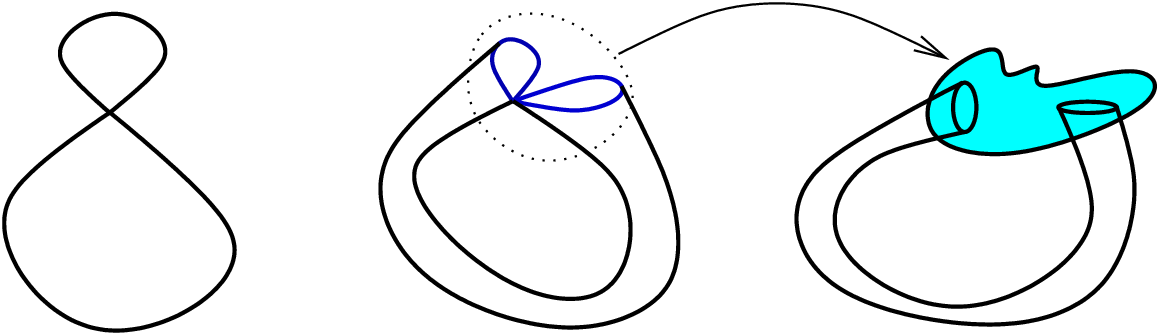}
}
\caption{Illustration for Lemma~\ref{lem::killsimplesubgroup}. The free subgroup $\langle y,xyx^{-1}\rangle$ is mapped to, but is not embedded in, $V$.}
\label{fig::dunwoody}
\end{figure}

The construction of Theorem~\ref{lem:nonpinchingsequence} fails for the
pair of pants and punctured torus since every map from these two surfaces to a free
group with nonabelian image is injective. Rather than work with a free group we
consider maps to the fundamental group of the figure-eight knot complement where
we can apply a similar technique.

\begin{lemma}
  \label{figureeightnonpinching}
  Let $M$ be the figure-eight knot complement. Let $P$ be a pair of
  pants with fundamental group $\langle x,y\rangle$, such that the
  three boundary components of $P$ are represented by $x$, $y$, and
  $xy$. There is a sequence of noninjective homomorphisms $f_n\colon
  \langle x,y\rangle \onto \pi_1(M)$ such that
  \begin{itemize}
  \item $f_n(x)$, $f_n(y)$ and $f_n(xy)$ are all nontrivial, nonparabolic,
    nonconjugate, and indivisible in $\pi_1(M)$.
  \item $f_n$ is eventually non-$k$-pinching for any $k$.
  \end{itemize}
\end{lemma}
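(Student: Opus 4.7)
The plan is to follow the template of Lemma~\ref{lem:nonpinchingsequence}, with $\pi_1(M)$ replacing $\free$ as the target and $F_2=\langle a,b\rangle$ viewed as $\pi_1(\Sigma)$ for $\Sigma$ the three-punctured sphere whose peripheral classes are represented by $a$, $b$, and $(ab)^{-1}$. This is the shape needed to build the genus two surface example of the third bullet of Theorem~\ref{maintheorem} by doubling two copies of $\Sigma$ along their boundaries.

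First I construct $f_n$ explicitly, in the spirit of the $\alpha$-maps of Example~\ref{alphamaps}. Fix a two-generator presentation $\pi_1(M)=\langle x,y\mid r\rangle$ and a normal form for $\pi_1(M)$ as reduced words in $\{x^{\pm 1},y^{\pm 1}\}^*$ modulo $r$. Set $f_n(a)=x$ and $f_n(b)=(yx)(yx^2)\dotsb(yx^n)$, so that $f_n(ab)=x\cdot f_n(b)$. Inspection of the exponent sequences shows that $f_n(a)$, $f_n(b)$, and $f_n(ab)$ are cyclically reduced, pairwise nonconjugate, indivisible in $\pi_1(M)$, and satisfy $o(f_n(b)), o(f_n(ab))=O(1/n)\to 0$. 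Surjectivity follows because Nielsen-reducing $\{x,\,yxyx^2\dotsb yx^n\}$ recovers $y$; noninjectivity is automatic since $F_2$ is free while $\pi_1(M)$ contains a $\zee^2$ cusp subgroup.

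Second, I run the Rips-machine argument. Suppose, after passing to a subsequence, each $f_n$ is $k$-pinching, killing a fixed $k$-simple class $g\in F_2$. Convergence yields a limit quotient $\pi\colon F_2\onto L$ with $L$ a $\pi_1(M)$-limit group of rank at most two, acting stably on a real tree $T$. Build a band complex $X$ as in Lemma~\ref{lem:nonpinchingsequence} so that the three peripheral paths $p_a,p_b,p_{ab}$ are immersive, and run the Rips machine. The same immersive-path/$o\to 0$ dichotomy as before rules out thin, surface, and toral minimal components of $Y$: each would force $o(f_n(\text{periph}))$ to stay bounded below. Once $X$ is reduced to a rank-two simplicial graph with $Y$ an arc, the rose/theta/barbell case analysis of Lemma~\ref{lem:nonpinchingsequence} forces a centralizer collision among the three peripheral images, a homological obstruction, or non-surjectivity/$\Mod(\Sigma)$-conjugacy of $f_n$, all contradictions with the explicit construction.

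The main obstacle is that $\pi_1(M)$ contains a $\zee^2$ cusp subgroup, so toral components of $Y$ are not ruled out a priori as they are in the free-target case of the previous lemma. The key is that the $\alpha$-style construction arranges $f_n(a)$, $f_n(b)$, and $f_n(ab)$ to have pairwise noncommuting centralizers in $\pi_1(M)$, which prevents two of them from being simultaneously elliptic in a common abelian vertex group of any splitting of $L$. Once this is verified, the analysis runs in parallel with Lemma~\ref{lem:nonpinchingsequence} and delivers non-$k$-pinching for arbitrary $k$.
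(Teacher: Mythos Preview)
Your plan tries to port the band-complex/Rips-machine argument of Lemma~\ref{lem:nonpinchingsequence} wholesale to the target $\pi_1(M)$, but several load-bearing pieces of that lemma do not survive the change of target, and the paper in fact takes a different and shorter route.

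First, the function $o(w)$ and the immersive-path dichotomy in Lemma~\ref{lem:nonpinchingsequence} are set up for homomorphisms to a \emph{free} group: pieces and cyclic word length are defined in $\free$, and the claim that an immersive path meeting a minimal or simplicial-with-$\pi_1$ component forces $o(f_n(b_i))>\epsilon$ is about cancellation in $\free$. You invoke $o(f_n(b))$, $o(f_n(ab))\to 0$ for elements of $\pi_1(M)$ without saying what normal form you are using or why the Gromov--Hausdorff argument transfers; this is a real gap, not a formality. Second, the case analysis you cite (barbell/theta/rose, homology of four boundary classes, surjectivity forcing $u_n=a$, $v_n=b$) is tailored to the rank-three situation of a four-holed sphere collapsing to a rank-two graph; for the pair of pants the source is already rank two and there are only three peripheral classes, so that combinatorics does not apply as written. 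Third, you correctly flag the toral issue, but ``pairwise noncommuting centralizers'' only prevents two peripherals from landing in a common abelian vertex group; it does not by itself exclude a toral minimal component in the band complex, nor does it rule out an abelian edge group in a splitting of the limit quotient $L$.

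The paper's argument avoids all of this by exploiting two specific features. It chooses $f_n(b)=yx^{-1}$ constant and lying in the \emph{fiber subgroup} of $\pi_1(M)$ (which is free of rank two), and $f_n(a)=x(yx^{-1})^n$. After passing to a limit quotient $L$ acting on a tree, it uses that a two-generator group admits no essential amalgam over an abelian subgroup, so any splitting of $L$ is an \hnn\ extension $A*_B$ with $b$ elliptic. A Dunwoody folding step then produces an intermediate group with vertex group $\langle b,b^t\rangle$ mapping noninjectively to $\pi_1(M)$; but $f_n(b)$ and any conjugate $f_n(b)^t$ are distinct conjugates of a fiber element and hence generate a free subgroup of $\pi_1(M)$, a contradiction. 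This forces $L\cong F_2$, contradicting Hopficity. No band complex, no $o$-function, and no graph case analysis are needed; the fibered structure of $M$ does the work your Rips-machine transplant was meant to do.
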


The proof is like that of Theorem~\ref{lem:nonpinchingsequence}: The
modular group of a surface is much smaller than the automorphism group
of the underlying free group, except in the case of a punctured torus.

\begin{proof}
  Let $M$ be the figure-eight knot complement, and let $a$ and $b$ be
  the standard generators for the fundamental group illustrated in
  Figure~\ref{figureeight}. For sufficiently large $n$ the words
  $f_n(x)=a(ba^{-1})^n$, $f_n(y)=ba^{-1}$ and
  $f_n(xy)=a(ba^{-1})^{n+1}$ are nonconjugate and indivisible in
  $\pi_1(M).$ Suppose that $f_n$ is $k$-pinching for some fixed $k$. Triviality
  of the modular group of a pair of pants implies that by passing to a
  subsequence we may assume that, for all $n$,
  $f_n$ kills some fixed nontrivial element $g$ 

  Since $M$ is toral relatively hyperbolic $f_n$ converges to a
  nontrivial stable action of $\langle x,y\rangle$ on a real tree $T$,
  with $g$ in the kernel of this action
  (See~\cite{groves:relhyp}). Let $L$ be the (proper!) quotient of
  $\langle x,y\rangle$ by the kernel of the action on $T$. Then $L$ is
  a freely indecomposable limit group over $\pi_1(M)$. Since $f_n(y)$
  is constant, $b$ acts elliptically in the limiting tree, and by Rips
  machine $L$ splits nontrivially as $L=V*_EW$ or $V*_E$ over an abelian
  edge group $E$, with $y$ conjugate into $V$. By
  Lemma~\ref{lem::killsimplesubgroup}, $\langle xyx^{-1},y\rangle\to L$ is
  not injective, however, since $f_n(y)$ and $f_n(xyx^{-1})$ are
  noncommuting conjugates of an element of the fiber subgroup of
  $\pi_1(M)$ ($f_n(x)$ is not zero in homology), they generate a
  free group of rank two. Thus $L$ is free of rank two, violating the
  Hopf property for finitely generated free groups.
\end{proof}

This theorem holds for any torsion-free hyperbolic group, however 
the maps we construct are less explicit.

\begin{figure}[ht]
\labellist
\pinlabel $a$ [bl] at 68 55
\pinlabel $b$ [tr] at 51 27
\endlabellist
\centerline{\includegraphics[width=2in]{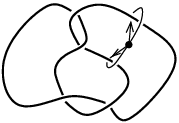}}
\caption{Generators $a$ and $b$ for the fundamental group of the
figure-eight knot complement.}
\label{figureeight}
\end{figure}

\begin{lemma}
  \label{nonpinchingpairsofpants}
  Let $G$ be a torsion free hyperbolic group and suppose that $G$
  admits a noninjective map from $\free_2=\langle x,y\rangle$ with
  nonabelian image.  There is a sequence of noninjective homomorphisms
  $f_n\colon\free_2\to G$ such that $f_n(x)$, $f_n(y)$, and $f_n(xy)$
  are nontrivial, nonconjugate, indivisible, and such that $f_n$ is
  eventually non-$k$-pinching, regarding $\free_2$ as the fundamental
  group of a pair of pants with boundary components represented by
  $x$, $y$, and $xy$.
\end{lemma}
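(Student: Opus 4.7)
The plan is to mimic the proof of Lemma~\ref{figureeightnonpinching}, with the given noninjective map playing the role of the inclusion of $\free_2$ into $\pi_1(M)$. Let $\phi\colon\free_2=\langle a,b\rangle\to G$ be the noninjective map with nonabelian image, write $x=\phi(a)$, $y=\phi(b)$, and set
\[
f_n(a)=x(yx^{-1})^n,\qquad f_n(b)=yx^{-1},
\]
so that $f_n=\phi\circ\alpha_n$ for the automorphism $\alpha_n\in\Aut(\free_2)$ given by $a\mapsto a(ba^{-1})^n$, $b\mapsto ba^{-1}$. Each $f_n$ inherits non-injectivity from $\phi$. Standard growth estimates in the torsion-free hyperbolic group $G$ show that, after discarding finitely many $n$, the three elements $f_n(a)$, $f_n(b)$, and $f_n(ab)=x(yx^{-1})^{n+1}$ are nontrivial, pairwise nonconjugate (their stable translation lengths are $\Theta(n)$, $\Theta(1)$, $\Theta(n)$ with $f_n(a)$ and $f_n(ab)$ distinguishable by translation length), and indivisible (each element of a torsion-free hyperbolic group has only finitely many roots up to conjugacy).

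For the non-pinching conclusion I would argue by contradiction: assume a subsequence of $f_n$ is $k$-pinching. Because the mapping class group of a pair of pants is finite we can, after passing to a further subsequence, arrange that a single nontrivial $g\in\free_2$ lies in every $\ker f_n$. Bestvina--Paulin then produces, after rescaling, a nontrivial stable action of $L=\free_2/N$ on an $\R$-tree $T$ with $g\in N$, so $L$ is a proper $G$-limit quotient of $\free_2$. The nonabelianity of $\phi(\free_2)$ rules out $L$ being cyclic, and the Hopf property for free groups rules out $L\cong\free_2$; thus $L$ is a two-generator, non-free, freely indecomposable $G$-limit group. Torsion-freeness of $G$ forces the resulting nontrivial abelian splitting of $L$ to be cyclic, and since two-generator groups do not split as essential amalgamated products, the splitting is an \hnn\ extension $L=A*_B$ with $\overline{b}$ conjugate into $A$. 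The Dunwoody-folding argument used in Lemma~\ref{figureeightnonpinching} then yields an intermediate quotient $L'=A'*_{B'}$ of $\free_2$ with $A'=\langle b, b^t\rangle$ and $A'\to G$ not injective.

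The main obstacle is to extract a contradiction from this last noninjectivity. In Lemma~\ref{figureeightnonpinching} the element $yx^{-1}$ lay inside the free fiber subgroup of $\pi_1(M)$, whose malnormality forced $f_n(b)$ and $f_n(b)^t$ to generate a rank-two free subgroup of $G$; no such ambient normal free subgroup is available for general torsion-free hyperbolic $G$. The plan would be to replace the fiber argument with a ping-pong argument: in a torsion-free hyperbolic group, two non-commuting elements fail to generate a rank-two free group only if their axes are parallel, i.e.\ they share a maximal cyclic subgroup, so it would suffice to show that for every large $n$ the stable letter $t$ produced by folding does not commute with $f_n(b)=yx^{-1}$. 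Verifying this for \emph{every} sufficiently large $n$, rather than merely passing to high powers of $f_n(b)$, is the delicate point; I would approach it by realizing $t$ as a limit of specific words in $x$ and $y$ dictated by the folding sequence and using the explicit form of $\alpha_n$, together with the nonabelianity of $\langle x,y\rangle$, to rule out the coincidence of axes uniformly in $n$.
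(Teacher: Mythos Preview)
Your approach diverges from the paper's and contains a real gap. The paper does not mimic the figure-eight argument at all: it simply takes a noninjective $f\colon\langle x,y\rangle\to G$, chooses automorphisms $\varphi_n\in\Aut(\free_2)$ fixing $[x,y]$ so that $f\varphi_n$ is a \emph{test sequence} in Sela's sense, and sets $f_n=f\varphi_n$. The test-sequence property means $f_n$ converges to $\free_2$ in the space of marked groups; since there are only finitely many $k$-simple curves on a pair of pants (its mapping class group is trivial), each one is eventually not killed, and likewise $a,b,ab$ eventually have nontrivial, nonconjugate, indivisible images because they do in $\free_2$. No tree analysis is needed.

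Your attempt to push the Lemma~\ref{figureeightnonpinching} argument through fails exactly where you flag it, and your proposed repair is incorrect. You assert that in a torsion-free hyperbolic group two non-commuting elements generate a rank-two free group unless their axes are parallel. This is false: take $G$ a closed surface group and standard generators $a,b$; they do not commute, do not share an axis, and do not generate a free group. So there is no reason the subgroup $\langle f_n(b),f_n(b)^t\rangle$ produced by your folding argument should be free, and the contradiction evaporates. The figure-eight proof genuinely uses that $yx^{-1}$ lives in a malnormal free normal subgroup (the fiber), a feature unavailable in an arbitrary torsion-free hyperbolic $G$. Your explicit automorphisms $\alpha_n$ are fine as candidates, but to finish you would need to show $f\alpha_n$ converges to $\free_2$, which is essentially the test-sequence statement the paper invokes; the tree-and-folding route does not provide an alternative derivation of this fact.

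A smaller issue: indivisibility of $f_n(a),f_n(b),f_n(ab)$ does not follow from ``each element has finitely many roots''; that only says each fixed element has bounded divisibility, not that the particular words $x(yx^{-1})^n$ are primitive in $G$ for large $n$.
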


\begin{proof}
  Let $f\colon\langle x,y\rangle\to G$ be a noninjective map of
  $\free_2$ in $G$.  Let $\varphi_n$ be a sequence of automorphisms of
  $\free_2$ fixing $\left[x,y\right]$ such that $f\circ\varphi_n$ is a test
  sequence~\cite{sela::dgog2}. Regard $\free_2$ as the fundamental
  group of a pair of pants. Then $f_n$ is
  always noninjective and is eventually non-$k$-pinching for any $k$.
\end{proof}

\begin{remark}
  Sela only defines test sequences for fundamental groups of closed surfaces.
  The construction is exactly the same in the case of a surface with boundary.
\end{remark}

\begin{question}
  Let $S$ be a surface with boundary and $\Gamma$ a hyperbolic
  group. Consider $\pi\colon S\to \Gamma$ and a sequence of
  automorphisms $\varphi_n$ of $\pi_1(S)$ such that
  $\pi\circ\varphi_n$ converges to $\pi_1(S)$. Then is
  $\pi\circ\varphi_n$ eventually non-$k$-pinching for any $k$ as long
  as $\pi$ embeds all subsurfaces $S'$ such that $\varphi_n$ stays in
  a finite set of cosets (up to conjugacy) of
  $\stab(\pi_1(S'))<\Aut(\pi_1(S))$?
\end{question}

\section{Surfaces without boundary}

Given an element of a group, it is natural to ask what the simplest
element in its normal closure is.

\begin{theorem*}[\cite{hempel}]
  Let $S$ be an orientable surface. If $g\in\pi_1(S)$ is not
  representable by a simple closed curve then $\ncl{g}$ contains no
  simple closed curves. 
\end{theorem*}

Conversely, in the category of groups which split over $\mathbb{Z}$,
there is no loss in considering only maps killing elements
supported on proper subsurfaces:

\begin{theorem}
  Let $S$ be a closed compact surface and suppose that $G$ is a
  quotient of $\pi_1(S)$ with a splitting over $\mathbb{Z}$. There is
  an intermediate quotient $\pi_1(S)\to G'\to G$ such that $G'$ is a
  quotient of $\pi_1(S)$ by a subgroup normally generated by an
  element supported on a proper subsurface of $S$.
\end{theorem}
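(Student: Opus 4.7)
The plan is to pull the cyclic splitting of $L$ back to an action of $\pi_1(S)$ on a simplicial tree, dualize this action to a multicurve on $S$, and then split into cases according to how $f\colon\pi_1(S)\twoheadrightarrow L$ behaves on the curves of the multicurve and on the complementary subsurfaces.

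First I would take the Bass-Serre tree $T$ of the cyclic splitting of $L$ and compose with $f$ to obtain a $\pi_1(S)$-action on $T$. The underlying graphs of the resulting graph-of-groups decompositions of $\pi_1(S)$ and of $L$ agree since $T/\pi_1(S)=T/L$. Using that $\pi_1(S)$ is a surface group, construct a $\pi_1(S)$-equivariant continuous map $\widetilde{S}\to T$ and put it in general position with respect to midpoints of edges of $T$. The preimages of the midpoints descend to a disjoint $1$-submanifold of $S$ which, after removing inessential (nullhomotopic) components by homotoping the map across bounding disks, becomes a collection $\gamma_1,\dotsc,\gamma_k$ of disjoint essential simple closed curves on $S$.

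Each (lift of a) curve $\gamma_i$ stabilizes an edge of $T$ under the $\pi_1(S)$-action, so $f(\gamma_i)$ lies in a cyclic edge stabilizer of $L$. If some $f(\gamma_i)=1$, then $\gamma_i$ is a simple closed curve in $\ker f$ and we are done. Otherwise, assume every $f(\gamma_i)\neq 1$. Cutting $S$ along the $\gamma_i$ produces proper subsurfaces $\Sigma_1,\dotsc,\Sigma_m$ (proper because $S$ is closed and the splitting of $L$, hence the multicurve, is nontrivial). Van Kampen / Bass-Serre gives $\pi_1(S)$ a graph-of-groups decomposition with vertex groups $\pi_1(\Sigma_j)$ and cyclic edge groups $\langle\gamma_i\rangle$, and $f$ is the morphism of graphs of groups induced by its restrictions to these vertex and edge groups.

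Now the edge-group restrictions $\langle\gamma_i\rangle\to L$ are injective since $f(\gamma_i)\neq 1$. If every vertex-group restriction $f|_{\pi_1(\Sigma_j)}$ were also injective, then the morphism of graphs of groups would have injective vertex and edge maps over the identity of the underlying graph, so $f$ itself would be injective and the theorem is vacuous. Otherwise, pick $j_0$ and a nontrivial $g\in\pi_1(\Sigma_{j_0})\cap\ker f$. Set $L'=\pi_1(S)/\ncl{g}$. Since $g\in\ker f$ and $\ker f$ is normal, $\ncl{g}\subseteq\ker f$, so $\pi_1(S)\twoheadrightarrow L'\twoheadrightarrow L$ is an intermediate quotient, and its kernel is normally generated by $g$, an element supported on the proper subsurface $\Sigma_{j_0}$.

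The main obstacle, I expect, is the dualization step: producing an honest essential multicurve from the abstract simplicial action on $T$, in particular verifying that inessential components can be eliminated and that the resulting decomposition of $\pi_1(S)$ is the Bass-Serre decomposition for the $\pi_1(S)$-action on $T$. This is classical for surface groups acting on simplicial trees (via equivariant transverse maps, or by appealing to Scott-Wall), but it is the technical heart of the argument; the remaining case analysis is routine Bass-Serre bookkeeping.
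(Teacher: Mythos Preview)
Your overall architecture---pull back the Bass--Serre tree, dualize to a multicurve $\Lambda$, discard curves with trivial image, and then locate a nontrivial kernel element inside some complementary subsurface $\Sigma_j$---is exactly the paper's strategy. The difference is in how the last step is justified, and there you have skipped the one nontrivial idea.

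The assertion ``injective vertex and edge maps over the identity of the underlying graph imply $f$ is injective'' is \emph{not} a general fact about morphisms of graphs of groups. What can go wrong is that some $s\in\pi_1(\Sigma_j)$ lies outside every boundary subgroup $\langle\gamma_i\rangle$ yet has $f(s)$ in the cyclic edge group $C$ of $L$; then the normal form of a kernel element can collapse in $L$ even though no single vertex restriction has kernel. In the present setting the claim is nevertheless \emph{true}, but only because $C$ is cyclic and each $\gamma_i$ is primitive in the free group $\pi_1(\Sigma_j)$: from such an $s$ one gets $[s,\gamma_i]\in\pi_1(\Sigma_j)\cap\ker f$ nontrivial, contradicting injectivity on $\pi_1(\Sigma_j)$. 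That commutator is precisely what the paper produces, in geometric form: it takes a kernel curve $c$ of minimal intersection with $\Lambda$, bounds it by a disk in the target complex, and from an innermost arc extracts either $[\lambda_1,\gamma\lambda_2\gamma^{-1}]$ or $[\gamma,\lambda]$, a nontrivial kernel element supported in a complementary piece. So your missing justification is the actual content of the paper's proof.

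A smaller point: the decomposition of $\pi_1(S)$ coming from the multicurve $\Lambda$ is not in general ``over the identity of the underlying graph'' of $L$'s splitting. The dual graph to $\Lambda$ can have many more edges than $T/L$ (several parallel curves can map to the same edge of $T$), so the two graph-of-groups structures you invoke are different; the map of trees $T'\to T$ is a fold, not an isomorphism. This does not break your argument once the commutator step above is supplied, but the sentence identifying the two underlying graphs should be removed.
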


\begin{proof}
  Suppose, without loss of generality, that $G=A*_CB$.  Let $X$ and
  $Y$ be spaces such that $\pi_1(Y)=A$, $\pi_1(Z)=B$. Represent $G$ as
  the fundamental group of a complex $X$ of the form
  \[X=((Y\sqcup Z)\sqcup(S^1\times I))/\{g(a)=(a,0),f(b)=(b,1)\}\]
  where $f,g\colon S^1\to Y,Z$ represent the inclusions $C\into A$ and
  $C\into B$. Represent $\pi_1(S)\to G$ by a continuous map
  $\varphi\colon S\to X$, and suppose that $\varphi$ is transverse to
  $S^1\times\frac{1}{2}$. We may assume that no connected component of
  $\Lambda=\varphi^{-1}(S^1\times\frac{1}{2})$ bounds a disk or a
  simple closed curve having trivial image in $X$, otherwise $\varphi$
  may be chosen to have smaller intersection with
  $S^1\times\frac{1}{2}$ or $\varphi$ kills a simple closed curve. Let
  $c$ be a curve in the kernel of $\varphi$ having smallest
  intersection number with $\Lambda$. Extend the map $c\to X$ to a
  disk $D$, arranging that the extension is also transverse to
  $S^1\times\frac{1}{2}$, and let $\alpha\colon I\to D$ be an
  outermost arc in the preimage of $S^1\times\frac{1}{2}$. Then
  $\alpha$ cuts off an arc $\gamma$ in $c$ which is sent to, without
  loss, $(Y\sqcup S^1\times(0,\frac{1}{2}))/g(a)=(a,0)$. Suppose that
  $\gamma$ connects distinct components $\lambda_1$ and $\lambda_2$ of
  $\Lambda$. Then $\left[\lambda_1,\gamma\lambda_2\gamma^{-1}\right]$
  has trivial image in $G$ and is contained in a proper subsurface of
  $S$. If $\gamma$ connects a component of $\Lambda$ to itself then
  either $c$ was not shortest or $\left[\gamma,\lambda\right]$ has
  trivial image in $G$.
\end{proof}

Thus if there are non-pinching quotients of surface groups that split
over $\zee$ then there are necessarily non-pinching quotients of
surface groups where relations are added to elements of a proper
subsurface only.

There are proper nonpinching limit group
quotients of surface groups with genus at least four:

\begin{example}
  \label{ex::hempel}
  Let $S$ be a closed orientable surface of genus $g\geq 4$ and let
  $P$ and $P'$ be two pairs of pants in $S$, each having three
  complementary components, such that $P\cap P'$ is a separating
  simple closed curve $c$. Let $f^{(\prime)}\colon P^{(\prime)}\to
  S^1$ be a map sending $c$ to $2$ and each other boundary component
  to $1$. Then the map $S\to S/f$ is not pinching, by Hempel's
  theorem, since the kernel is normally generated by a curve of self
  intersection number one. There is a natural map $S/f\to S/f\sqcup
  f'$. Then $S/f\cup f'$ is four (multi) handles glued along their
  boundary to a circle, and there is a retraction to the handle $H$ of
  smallest genus. The morphisms $S/f\to S/f\cup f'$ and $S/f\cup f'\to
  H$ are clearly both strict, and $\pi_1(S/f)$ is therefore a limit
  group. See Figure~\ref{hempelfigure}.
\end{example}

One can avoid Hempel's theorem and make this argument more in the spirit of Lemma~\ref{supernonpinching}. 

\begin{figure}[ht]
\labellist
\pinlabel $S$ at 52 69
\pinlabel $S/f$ at 175 69
\pinlabel $S/f\cup f'$ at 292 68
\pinlabel $H$ at 372 62
\endlabellist
\centerline{
\includegraphics[width=\textwidth]{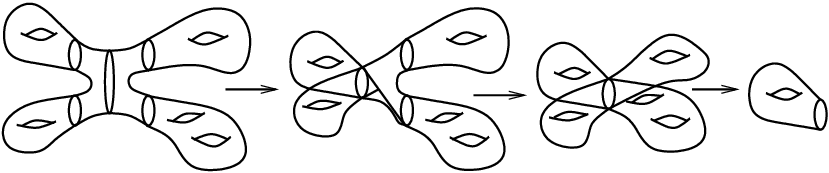}
}
\caption{Illustration for Example~\ref{ex::hempel}. We collapse the
  left-hand pair of pants to a circle so the common curve is mapped to
  $2$. The resulting group is a limit group since it has a strict
  homomorphism to a free group, represented here as the fundamental
  group of $H$. The quotient map $S\to S/f$ is nonpinching.}
\label{hempelfigure}
\end{figure}

\subsection*{Assembling non-pinching maps}

The following criterion will be used to show that certain morphisms of
surface groups don't kill elements representable by curves of a given
self-intersection number.

\begin{lemma}
  Let $S$ be a compact surface with boundary and let $S_0\subset S$ be
  a compact deformation retract of $S$ such that $S\setminus S_0$ is a
  collection of half-open annuli. Let $Z$ be a space. If $f\colon
  S_0\to Z$ is $4k+4$ non-pinching then no non-boundary-parallel
  $k$-simple arc $\alpha\colon(I,\partial I)\to (S,\partial S)$ is
  homotopic rel boundary into $\partial S\subset S/f$.
\end{lemma}

\begin{proof}
  Let $p$ be a basepoint in $\partial S$ and let $\beta$ be an arc
  self intersecting only at $p$, representing the component of
  $\partial S$ containing $p$. We may assume that $\alpha$ is
  homotopic to an arc in $S$ starting and ending at $p$ with at most
  $k$ points of self intersection other than at $p$. If $\alpha$ is
  homotopic rel boundary into $\partial S$ then
  $\alpha\beta\alpha^{-1}\beta^{-1}$ has trivial image in $S/f$ and
  has self-intersection number at most $4k+4$.
\end{proof}

\begin{lemma}
  \label{supernonpinching}
  Let $S$ be a compact surface and let $S_0$ be a proper (not
  necessarily connected) closed essential subsurface. If $f\colon
  S_0\to Z$ is $4k+4$ non-pinching then the map $S\to S/f$ is
  non-$k$-pinching.
\end{lemma}

\begin{proof}
  Let $c$ be a $k$-simple closed curve in $S$, and homotope $c$ to
  have minimal intersection with $\partial S_0$. Let $S_0'$ be a
  regular neighborhood of $S_0$, and suppose that $c$ has minimal
  intersection with $\partial S_0'$ as well. Suppose that $c$ has
  trivial image in $S/f$. Let $D$ be a disk and choose a map $h\colon
  D\to S/f$ sending $\partial D$ to $c$. Arrange that $h$ is
  transverse to $\partial S_0'\subset S/f$. By the outermost arc
  argument some subarc $\alpha$ of $c$ mapping to $S_0'$ is homotopic
  into $\partial S_0'$. By the previous lemma, since $\alpha$ has at
  most $k$ points of self intersection it must have been homotopic
  into $\partial S_0'$, contradicting the assumption that $c$ have
  minimal intersection number with $\partial S_0$.
\end{proof}

\section{Proof of Theorem~\ref{maintheorem}}
\label{sec::proofmain}

The first bullet of Theorem~\ref{maintheorem} and
Corollary~\ref{cor:elementary} follow from:

\begin{example}
  Let $S_g$ be an orientable surface of genus at least three, and let
  $S'\subset S$ be a nonseparating four-holed sphere. Let $X$ be a
  graph, and suppose that $f\colon S'\to X$ has nonabelian image and
  doesn't kill any boundary components of $S'$. Then $\pi_1(S/f)$ is a
  limit group.  It suffices to show the case $g=3$: Let $S''\subset
  S\setminus S'$ be a $g-3$ handle. Let $q\colon S''\to p$ be the map
  to a point. The induced map $S/f\to (S/f)/q$ is strict, and $S/q$ is
  a genus three orientable surface, but $(S/f)/q=(S/q)/f$, identifying
  $S'\subset S$ with $S'\subset S/q$.  In genus $3$, let $r$ be the
  reflection of $S$ fixing $\partial S'$ pointwise. Define a map
  $h\colon S\to X$ by $x\mapsto f(r(x))$ for $x\in S\setminus S'$,
  otherwise $x\mapsto f(x)$. Then $h$ factors through $S/f$ and the
  induced map $S/f\to X$ is strict.  If $f$ is not $4k+4$-pinching
  then the natural map $S\to S/f$ is non-$k$-pinching by
  Lemma~\ref{supernonpinching}, and by Example~\ref{alphamaps} there
  are $4k+4$ non-pinching maps for any $k$.  By construction, $S/f$ is
  a tower over $X$, and Corollary~\ref{cor:elementary} holds by
  Theorem~\ref{sela::towers}.
\end{example}

For the second bullet we use the sequence of homomorphisms from a pair
of pants to the figure eight knot complement rather than maps to a
free or hyperbolic group.

\begin{example}
  \label{ex::genus2}
  Let $M$ be the fundamental group of the figure-eight knot
  complement.  Let $f_n$ be the sequence from
  Lemma~\ref{figureeightnonpinching}, and let $G_n$ be the double of
  $\pi_1(M)$ along the three elements $f_n(a)$, $f_n(b)$ and
  $f_n(ab)$. The double $G_n$ is fully residually $\pi_1(M)$ and there
  is a natural map $g_n\colon \pi_1(S_2)\onto G_n$ induced by
  $f_n$. The map $g_n$ is non-$k$-pinching for large $n$, again by
  Lemma~\ref{supernonpinching}. This proves the second bullet of
  Theorem~\ref{maintheorem}.
\end{example}

There are similar examples where $X$ or $M$ is replaced by a torsion
free hyperbolic group.

\begin{example}
  Let $S$ be the fundamental group of a closed oriented surface and
  let $S'\subset S$ be a nonseparating pair of pants in $S$.  Identify
  $\free_2$ with the fundamental group $S'$. Let $G$ be a torsion-free
  hyperbolic group admitting a nonpinching map from $\free_2$ with
  nonabelian image. Form the group $G_n=G*S/(P=f_n(P))$, where $f_n$
  is a sequence provided by Lemma~\ref{nonpinchingpairsofpants}. Then
  $G_n$ is a tower over $G$ as long as $\chi(S)<-2$. The natural map
  $\pi_1(S)\to G_n$ is noninjective and non-$k$-pinching for large
  $n$.
\end{example}

In Example~\ref{ex::genus2}, $M$ may certainly be replaced by a group
embedding in $\sl2c$ as in the last example. The sequence $f_n$ in
this case should be taken to be the less explicit one from
Lemma~\ref{nonpinchingpairsofpants}.

\section{Proof of Corollary~\ref{maintheorem2}}
\label{faithful}

In this section we show that Theorem~\ref{maintheorem} implies
Corollary~\ref{maintheorem2}. Everything here is standard and well
known. So far we have shown that every closed orientable surface
either admits a non-$k$-pinching quotient which is either a limit
group or a limit group over the figure-eight knot complement. All that
remains is to show that these quotients have faithful all-loxodromic
representations in $\sl2c$.

Recall that an element $g$ of $\sl2c$ is \emph{loxodromic} if
$\tr(g)^2\not\in\left[-2,2\right]\subset\mathbb{C}$, and an element
$\left[g\right]$ in $\psl2c$ is loxodromic if
$\tr(g)\not\in\left[0,4\right].$ The translation length $\Vert g\Vert$
of an element $g$ of $\sl2c$ is the infimum of displacements of points
in $\mathbb{H}^3$ under the action of $\overline g\in\psl2c$, and an
element has positive translation length if and only if it is
loxodromic.

\begin{lemma}
  \label{densitylemma}
  Let $V$ be an irreducible component of $\Hom(G,\sl2c)$. Fix $g\in
  G$. If $\rho(g)$ is loxodromic for some $\rho\in V$ then
  \[L_g\define\{\rho\in V\mid \rho(g)\text{ is loxodromic}\}\] 
  is open and dense in $V$.
\end{lemma}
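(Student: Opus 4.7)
The plan is to view $f(\rho) = \tr(\rho(g))^2$ as a regular (polynomial) function $f\colon V\to \mathbb{C}$, and to exploit that loxodromicity is characterized by $f(\rho)\notin[-2,2]$. Throughout, "open and dense" refers to the Euclidean (complex analytic) topology, since $[-2,2]$ is not Zariski closed in $\mathbb{C}$.

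Openness is immediate: the set $\mathbb{C}\setminus[-2,2]$ is open in $\mathbb{C}$, and $L_g=f^{-1}(\mathbb{C}\setminus[-2,2])$ is the preimage under a continuous map. For density, I would split into two cases. If $f$ is constant on $V$, then the hypothesis that some $\rho\in V$ has $\rho(g)$ loxodromic forces the constant value to lie outside $[-2,2]$, so $L_g=V$.

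The substantive case is when $f$ is non-constant on $V$. It then suffices to show $f^{-1}(\mathbb{R})$ is nowhere dense in $V$, since $V\setminus L_g\subset f^{-1}(\mathbb{R})$. Let $V^{\mathrm{sm}}$ denote the smooth locus of $V$; the singular locus is a proper Zariski closed subset of an irreducible variety, hence nowhere dense in the Euclidean topology. Suppose for contradiction that $f^{-1}(\mathbb{R})$ contains a Euclidean-open subset $U$ of $V^{\mathrm{sm}}$. On $U$ the holomorphic function $f$ takes only real values, so $\Im f\equiv 0$ on $U$. Restricting $f$ to any complex analytic curve through a point of $U$ gives a holomorphic function of one variable with identically vanishing imaginary part on an open set; by the Cauchy–Riemann equations (equivalently, the open mapping theorem) it is locally constant. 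Covering $U$ by such curves shows $f$ is locally constant on $U$, and then the identity principle for holomorphic functions on the irreducible complex analytic space $V^{\mathrm{sm}}$ forces $f$ to be constant on $V$, contradicting our assumption.

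The main potential obstacle is the appeal to the identity principle on a possibly singular irreducible variety, but this is handled cleanly by restricting to the smooth locus $V^{\mathrm{sm}}$, which is connected (because $V$ is irreducible) and a complex manifold, and then noting that $V\setminus V^{\mathrm{sm}}$ is nowhere dense. The argument shows $f^{-1}(\mathbb{R})$ is contained in a real-analytic hypersurface of $V^{\mathrm{sm}}$ together with a proper subvariety, hence is nowhere dense in $V$, as required.
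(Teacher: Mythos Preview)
Your proof is correct and takes essentially the same approach as the paper: both view the trace (or its square) as a polynomial map $V\to\mathbb{C}$ and invoke the open mapping theorem / identity principle to conclude that the preimage of a real interval is nowhere dense unless the map is constant. Your version is more detailed than the paper's one-line appeal to the open mapping theorem, in particular handling the singular locus explicitly; one cosmetic slip is that with $f=\tr^2$ the loxodromic condition should read $f(\rho)\notin[0,4]$ rather than $\notin[-2,2]$ (the latter is the condition on $\tr$ itself), but this does not affect the argument.
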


\begin{proof}
  Let $\trace_g(\rho)=\trace(\rho(g))$. Then
  $L_g=\trace_g^{-1}(\mathbb{C}\setminus\left[-2,2\right])$. Since the
  trace and evaluation maps are polynomials, by the open mapping
  theorem, either $L_g$ is empty or its complement has empty interior.
\end{proof}

\begin{lemma}
  \label{faithfulrepresentations}
  Let $G$ be a group with a faithful all-loxodromic representation in
  $\sl2c$. If $H$ is a countable and fully-residually $G$ group then
  $H$ has a faithful all-loxodromic representation in $\sl2c$.
\end{lemma}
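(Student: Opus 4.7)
The plan is to follow the Baire-category strategy of Lemma~\ref{extendcentralizers}. First, use countability to enumerate $H\setminus\{1\}=\{h_1,h_2,\dotsc\}$, and use that $H$ is fully residually $G$ to choose, for each $n$, a homomorphism $f_n\colon H\to G$ injective on $\{h_1,\dotsc,h_n\}$. Setting $\tau_n=\rho_G\circ f_n\colon H\to\sl2c$, where $\rho_G$ is the given faithful all-loxodromic representation of $G$, we have that $\tau_n(h_i)$ is loxodromic whenever $i\leq n$, since $f_n(h_i)\neq 1$ and $\rho_G$ is all-loxodromic.

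When $H$ is finitely generated, $\Hom(H,\sl2c)$ is a complex affine variety with only finitely many irreducible components, so by pigeonhole some subsequence of $\{\tau_n\}$ lies in a common component $V$. For each $i$, the set $L_{h_i}=\{\rho\in V\mid\rho(h_i)\text{ is loxodromic}\}$ contains $\tau_n$ for all sufficiently large $n$ in the subsequence, hence is non-empty; Lemma~\ref{densitylemma} then promotes this to $L_{h_i}$ being open and dense in $V$. Since $V$ is a Baire space and the index set is countable, Baire's theorem gives $\bigcap_i L_{h_i}\neq\emptyset$, and any $\rho$ in the intersection is an all-loxodromic, hence faithful, representation of $H$.

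For general countable $H$ I would exhaust $H$ by finitely generated subgroups $H_1\subset H_2\subset\dotsb$, arranged so that each $h_i$ lies in some $H_{m(i)}$, apply the preceding paragraph to each $H_m$ to obtain an irreducible component $V_m\subset\Hom(H_m,\sl2c)$ in which infinitely many $\tau_n\vert_{H_m}$ lie, and then use K\"onig's lemma on the finite-branching tree of such choices to produce a coherent tower $\dotsb\to V_{m+1}\to V_m$ under restriction. The inverse limit $V\subset\Hom(H,\sl2c)$ is a Polish subspace into which each $L_{h_i}$ pulls back to an open dense set, and a final application of Baire's theorem in $V$ finishes the argument. The main obstacle I anticipate is precisely this coherence step in the non-finitely-generated case: the clean finite-dimensional algebro-geometric structure of the f.g.\ argument no longer applies directly in the inverse limit, so some care is needed to verify non-emptiness of $V$ and density of each $L_{h_i}\cap V$. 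In both settings the real engine is Lemma~\ref{densitylemma}, which is what converts mere non-emptiness of $L_g$ in a component into Zariski density and thereby makes the Baire step go through.
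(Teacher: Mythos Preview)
Your finitely generated case is exactly the paper's argument: produce $\tau_n=\rho\circ f_n$, pass to a subsequence lying in a single irreducible component $V$ of $\Hom(H,\sl2c)$, invoke Lemma~\ref{densitylemma} to make each $L_g$ open and dense, and finish with Baire. The paper does not separate out the non-finitely-generated case at all; it simply asserts that one may pass to a subsequence in a single irreducible component of $\Hom(H,\sl2c)$ and proceeds, so in that regard you are being more scrupulous than the paper. Your K\"onig's-lemma tower of components $V_m\subset\Hom(H_m,\sl2c)$ is a reasonable way to make that step honest, and the obstacle you flag (non-emptiness and Baire in the inverse limit) is real but surmountable: once the coherent chain $(V_m)$ is chosen, a further diagonal extraction yields a single subsequence $(\tau_{n_k})$ with $\tau_{n_k}\vert_{H_m}\in V_m$ for all large $k$, and then for each $h\in H$ the set $\{\rho\in V_m:\rho(h)\text{ loxodromic}\}$ is open and dense in $V_m$ for every $m$ with $h\in H_m$; one can then run Baire in a fixed $V_{m_0}$ against the countably many conditions coming from $H_{m_0}$ together with the Zariski-closed conditions cutting out the images of the higher $V_m$. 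In any event, the paper only ever applies this lemma to limit groups, which are finitely generated by definition, so the delicate part of your argument is not needed for the applications here.
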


\begin{proof}
  Let $f_n\colon H\to G$ be a sequence of homomorphisms converging to
  $H$, and let $\rho\colon G\to\sl2c$ be a faithful all-loxodromic
  representation of $G$. We may assume, by passing to a subsequence,
  that all $\rho f_n$ are contained in the same irreducible component
  $V$ of $\Hom(H,\sl2c)$. Given a nontrivial $g\in H$ there is some
  $n$ such that $\rho(f_n(g))$ is loxodromic. By
  Lemma~\ref{densitylemma} $L_g$ is open and dense in $V$, and by
  Baire category theorem $W=\cap_{g\in H}L_g$ is nonempty. Any point
  in $W$ is a faithful all-loxodromic representation of $H$.
\end{proof}

Free groups have faithful all-loxodromic representations in $\sl2c$,
so Corollary~\ref{maintheorem2} follows from
Lemma~\ref{faithfulrepresentations} and the first bullet of
Theorem~\ref{maintheorem} when the genus is at least three. For genus
two see Example~\ref{ex::genus2} and the following.

\begin{lemma}
  The figure-eight knot complement has a faithful all-loxodromic
  representation in $\sl2c$.
\end{lemma}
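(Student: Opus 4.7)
The plan is to imitate the Baire category argument of Lemma~\ref{faithfulrepresentations}, starting not from a sequence of limit group homomorphisms but from the one-complex-parameter deformation of the discrete faithful holonomy representation $\rho_0\colon\pi_1(M)\to\sl2c$ coming from the complete hyperbolic structure on $M$ (which lifts from $\psl2c$ to $\sl2c$ because $M$ is a knot complement). Under $\rho_0$, the only non-loxodromic non-trivial elements are those conjugate into the peripheral subgroup $P=\langle\mu,\lambda\rangle\cong\zee^2$, all of which are parabolic. Let $V$ be the irreducible component of $\Hom(\pi_1(M),\sl2c)$ containing $\rho_0$. I would show that for every non-trivial $g\in\pi_1(M)$ the loxodromic locus $L_g=\{\rho\in V\mid\rho(g)\text{ is loxodromic}\}$ is non-empty; Lemma~\ref{densitylemma} then promotes $L_g$ to an open and dense subset of the Baire space $V$, and Baire's theorem produces a $\rho\in\bigcap_{g\neq 1}L_g$, which is all-loxodromic and, since loxodromic elements are non-trivial, faithful.

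For $g$ not conjugate into $P$ one has $\rho_0\in L_g$ immediately. For $g=h\mu^p\lambda^q h^{-1}$ with $(p,q)\neq(0,0)$, traces are conjugation invariant so it suffices to make $\rho(\mu^p\lambda^q)$ loxodromic for some $\rho\in V$. Here I would invoke Thurston's deformation theorem for cusped hyperbolic $3$-manifolds: near $[\rho_0]$ the character variety is smooth of complex dimension one, with $\trace\rho(\mu)$ a local holomorphic coordinate; the commuting matrices $\rho(\mu),\rho(\lambda)$ share an axis of complex translation lengths $\ell_\mu,\ell_\lambda$ satisfying $\ell_\lambda=\tau\ell_\mu+O(\ell_\mu^2)$, where $\tau=\sqrt{-3}$ is the cusp modulus of the figure-eight knot. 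Since $\tau$ is irrational, $p+q\tau\neq 0$ for every non-zero $(p,q)\in\zee^2$, so for $\rho\in V$ sufficiently close to but distinct from $\rho_0$ the complex length $p\ell_\mu+q\ell_\lambda$ of $\rho(\mu^p\lambda^q)$ is non-zero and the element is loxodromic.

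The main obstacle is this peripheral case, which rests on Thurston's deformation theory together with the irrationality of the cusp modulus $\tau$. A variant argument that bypasses any explicit knowledge of $\tau$ would invoke Thurston's hyperbolic Dehn surgery theorem directly: for each non-zero $(p,q)$ choose coprime $(p',q')$ with $p'q-pq'\neq 0$ such that the $p'/q'$-filling $M_{p'/q'}$ is hyperbolic, and compose $\pi_1(M)\twoheadrightarrow\pi_1(M_{p'/q'})$ with the discrete faithful representation of the filled manifold; the image of $\mu^p\lambda^q$ is then a non-trivial power of the loxodromic holonomy around the new core geodesic, giving an element of $L_{\mu^p\lambda^q}$, and as $(p',q')\to\infty$ the resulting representations accumulate on $\rho_0$ and therefore belong to the same component $V$.
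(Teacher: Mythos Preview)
Your proposal is correct, and your variant argument is essentially the paper's proof: the paper takes the specific sequence of $(1,k)$ Dehn fillings $N_k$, observes that these are closed hyperbolic (so their holonomy is all-loxodromic and lifts to $\sl2c$), that $\pi_1(N_k)$ converges algebraically to $\pi_1(M)$ (so every nontrivial $g$ eventually survives and lands in some $L_g$), and then applies Lemma~\ref{densitylemma} and Baire exactly as you do.

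Your primary approach differs only in how you exhibit a point of $L_g$ for peripheral $g$: rather than passing to a closed filling, you stay on the deformation curve near $\rho_0$ and use the non-reality of the cusp modulus to see that $\mu^p\lambda^q$ becomes loxodromic under any small nontrivial deformation. This is a perfectly good alternative and is really the infinitesimal version of the Dehn-surgery argument (Thurston's hyperbolic Dehn surgery theorem is proved precisely by analyzing this deformation space). The paper's route is marginally cleaner in that a single sequence of fillings handles all $g$ uniformly without splitting into peripheral and non-peripheral cases, while your route makes the role of the complete structure $\rho_0$ more explicit and avoids quoting the full Dehn surgery theorem. Either way the content is the same circle of Thurston's ideas plus the Baire argument of Lemma~\ref{densitylemma}.
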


\begin{proof}
  Let $M$ be the figure-eight knot complement. Let $N_k$ be the
  $(1,k)$ Dehn filling of $M$. The manifolds $N_k$ are hyperbolic and
  each arises as the quotient of $\mathbb{H}^3$ by $\Gamma_k$, where
  $\Gamma_k$ is some discrete cocompact subgroup of $\psl2c$. By
  Thurston (see~\cite[Theorem~3.1.1]{cs}) the representation
  $\pi_1(N_k)\to\psl2c$ lifts to a representation
  $\rho\colon\pi_1(N_k)\to\sl2c.$ The groups $\pi_1(N_k)$ converge
  algebraically to $\pi_1(M)$ and by Lemma~\ref{densitylemma} and
  Baire's theorem $\pi_1(M)$ has a faithful all-loxodromic
  representation in $\sl2c$.
\end{proof}

\bibliographystyle{amsalpha} \bibliography{slc}

\begin{flushleft}
Ann Arbor/Coventry\\
\emph{email:}\texttt{l.louder@ucl.ac.uk, lars@d503.net}
\end{flushleft}

\end{document}